\newcommand{\C}{{\mathbb C}}                  
\def\al{\alpha}
\def\om{\omega}
\def\Om{\Omega}
\def\ga{\gamma}
\def\si{\sigma}
\def\la{\lambda}
\newtheorem{thm}{Theorem}[section]
\newtheorem{lem}[thm]{Lemma}
\theoremstyle{definition}
\newtheorem{defn}[thm]{Definition}
\theoremstyle{remark}
\theoremstyle{example}
\def\C{\mathbb C}
\title{On nonlinear Miyadera-Voigt perturbations}
\author{Mohamed Fkirine}
\address{Department of Mathematics, Faculty of Sciences Hay Dakhla, BP8106, 80000--Agadir, Morocco}
\email{fkirinemohamed@gmail.com}
\author{Said Hadd}
\address{Department of Mathematics, Faculty of Sciences Hay Dakhla, BP8106, 80000--Agadir, Morocco}
\email{s.hadd@uiz.ac.ma}
\subjclass{Primary 60H15, 35F20, 47H14; Secondary 35R60, 35B25, 30K40}
\keywords{Unbounded nonlinear perturbations, Maximal $L^p$-regularity, Semilinear equations}
\begin{document}

\maketitle

\begin{abstract}
  Let $A,C,P:D(A)\subset X\to X$ be  linear operators on a Banach space $X$ such that $-A$ generates a strongly continuous semigroup on $X$, and $F:X\to X$ be a globally Lipschitz function. We study the well-posedness of semilinear equations of the form $\dot{u}(t)=G(u(t))$, where $G:D(A)\to X$ is a nonlinear map defined by $G=-A+C+F\circ P$. In fact, using the concept of maximal $L^p$-regularity and a fixed point theorem, we establish the existence and uniqueness of a strong solution for the above-mentioned semilinear equation. We illustrate our results by applications to nonlinear heat equations with respect to Dirichlet and Neumann boundary conditions, and a nonlocal unbounded nonlinear perturbation.
\end{abstract}


\section{Introduction}\label{sec:1}
Semilinear parabolic equations of the type
\begin{align*}
	\dot{u}(t)+Au(t)=f(u(t)),\quad u(0)=x,\quad t\in [0,\tau],
\end{align*}
where $-A:D(A)\subset X\to X$ is a generator of an analytic semigroup on a Banach space $X$ and $f:X_\al\subset X\to X$ is a locally Lipschitz function with $X_\al$, $\al\in (0,1)$, can be an interpolation space between $D(A)$ and $X$ or $X_\al=D((-A)^\al)$, is well-studied, see e.g. Lunardi \cite{lunardi-95}. As already discussed in \cite[page 254]{lunardi-95}, we may have situations where $f$ is set to a domain that is neither an
interpolation space between $X$ and $D(A)$ nor $X_\al$.

In the present work, we study the well-posedness of the following abstract parabolic equation
\begin{align} \label{main-equa}
	\dot{u}(t)+Au(t)=Pu(t)+F(C u(t)),\quad u(0)=x,\quad t\in [0,\tau],
\end{align}
where $F:X\to X$ is a globally Lipschitz function and $P,C:D(A)\to X$ are linear operators not generally closed or closeable. If we select  \begin{align}\label{non-linear-perturbations}f:D(A)\to X,\quad f(x)=Px+F(Cx),\end{align} then we may find a constant $\ga>0$ such that
\begin{align*}
	\|f(x)-f(y)\|\le \ga \|x-y\|_{D(A)},\qquad (x,y\in D(A)).
\end{align*}
This estimate shows that $f$ is not a Lipschitz function with respect to the norm of $X$. This
fact offers many difficulties in applying the usual fixed point theorems and thus makes the study of the existence and uniqueness of the solutions of the equation \eqref{main-equa}  interesting. It is worth noting that the key references \cite[Chap.11]{CZ-20} and \cite[Chap.3]{lunardi-95} do not cover semilinear parabolic equations of the type  \eqref{main-equa}. We also mention that nonlinear perturbations of the kind \eqref{non-linear-perturbations} appeared for the first time in \cite{TV-03}, where $A$ is a Hille-Yosida operator. On the other hand,  the author of \cite{Web1972} studied operators of the form $-A+B$ with $B:X\to X$ is a continuous nonlinear accretive operator. Some applications of unbounded nonlinear perturbations can also be found in \cite{Web1976}. Recently the work \cite{FH-21} treated the well-posedness of the stochastic version of \eqref{main-equa}, where the approach is based on the concept of Yosida extensions and the approximation theory.

In the present work, we propose another approach based on the concept of maximal $L^p$-regularity to prove the existence and uniqueness of strong solutions of the semilinear equation \eqref{main-equa}. To this end, we assume that $A$ has the maximal $L^p$-regularity ($p\in (1,\infty)$), and for some $\al>0$, there exists $\ga >0$ such that
\begin{align}\label{Admi}
	\|Pe^{-\cdot A}x\|_{L^p([0,\al],X)} + \|Ce^{-\cdot A}x\|_{L^p([0,\al],X)} \le \ga \|x\|\qquad\quad (x\in D(A)).
\end{align}
Further, we assume that the nonlinear function $F:X\to X$ satisfies
\begin{align}\label{Lip}
	\|F(x)-F(y)\|\le \kappa \|x-y\|,\qquad (x,y\in X)
\end{align}
for a constant $\kappa>0$. We mention that operators satisfying the condition \eqref{Admi} are called admissible observation operators, see e.g. \cite[Chap.3]{TW-09} for more properties on such operators. Furthermore, using the conditions \eqref{Admi}-\eqref{Lip} and  H\"older's inequality, we can prove that there exist $\alpha_0>0$ and $\tilde{\ga}\in (0,1)$ such that
\begin{align}\label{N-MV}
\int^{\alpha_0}_0 \| f(T(t)x)-f(T(t)y)\|dt\le \tilde{\ga} \|x-y\|
\end{align}
for any $x,y\in D(A)$. A map $f$ satisfying the estimate \eqref{N-MV} will be called a  nonlinear Miyadera-Voigt perturbation for $A$ (see \cite{TV-03},  \cite{Web1976}).

By assuming that conditions \eqref{Admi}-\eqref{Lip} and  $A$ has the  maximal $L^p$-regularity, we will prove that the semilinear equation \eqref{main-equa} admits a unique strong solution $u\in W^{1,p}([0,\tau],X)\cap L^p([0,\tau],D(A))$ whenever the initial state $x$ belongs to the trace space.

In section \ref{sec:2}, we first give a concise background on maximal regularity and prove Theorem \ref{main-1}, the main result of the paper. In the last section, we illustrate our results by applications to nonlinear heat equation with respect to Dirichlet and Neumann boundary conditions.

\section{Existence, uniqueness and regularity of solution of a class of semilinear parabolic equations}\label{sec:2}
Let $A:D(A)\subset X\to X$ be a linear closed and densely defined operator on a Banach space $X$. Let $p>1$ and $\tau>0$ be real numbers and take  $g\in L^p([0,\tau],X)$. A strong solution of the equation
\begin{align}\label{para}
	\dot{u}(t)+Au(t)=g(t),\quad u(0)=0,\quad t\in [0,\tau],
\end{align}
is a function $u\in W^{1,p}([0,\tau],X)\cap L^p([0,\tau],D(A))$ such that $u(0)=0$ and $u$ satisfies \eqref{para} for almost every $t\in [0,\tau]$.

\begin{defn}\label{MR} We say that $A$ has the maximal $L^p$-regularity on $[0,\tau]$ if for any $g\in L^p([0,\tau],X)$ there exists a unique strong solution  $u\in W^{1,p}([0,\tau],X)\cap L^p([0,\tau],D(A))$ of the problem \eqref{para}.
\end{defn}
It is to be noted that the concept of maximal $L^p$-regularity is independent of $\tau>0$ and $p\in (0,\infty)$. We then only talk about maximal $L^p$-regularity. Moreover if $A$ has maximal $L^p$-regularity then the operator $(-A,D(A))$ generates an analytic semigroup on $X$. The converse is also true if $X$ is a Hilbert space. For more details and justifications of these facts, we refer to e.g. \cite{Dore}.

Motivated by the Definition \ref{MR}, we define the space of maximal regularity
\begin{align*}
	MR_p(0,\tau)&:= W^{1,p}([0,\tau],X)\cap L^p([0,\tau],D(A)).
\end{align*}
On this space we define the following norm
\begin{align*}
 \|u\|_{MR_p}&:=\|u\|_{ W^{1,p}([0,\tau],X)}+\|u\|_{L^p([0,\tau],D(A))},\qquad u\in MR_p(0,\tau).
\end{align*}
Then $(MR_p(0,\tau),\|\cdot\|_{MR_p})$ is a Banach space. We also define the trace space
\begin{align*}
	Tr_p:=\{u(0): u\in MR_p(0,1)\}.
\end{align*}
Endowed with the following norm
\begin{align*}
	\|x\|_{Tr_p}:=\inf\left\{\|u\|_{ MR_p(0,1)}: u\in  MR_p(0,1),\quad\text{and}\;u(0)=x\right\},
\end{align*}
$Tr_p$ is a Banach space satisfying the following dense and continuous embedding
\begin{align*}
	D(A)\hookrightarrow Tr_p\hookrightarrow X.
\end{align*}
We define the following linear operators
\begin{align*}
	& \mathscr{A} u:= A u(\cdot),\qquad D(\mathscr{A}):=L^p([0,\tau],D(A)),\cr
	& \partial u:=\dot{u},\qquad D(\partial):=\{u\in W^{1,p}([0,\tau]):u(0)=0\}=:W^{1,p}_0([0,\tau],X),\cr & \mathscr{L}_A : =\partial+\mathscr{A},\qquad D(\mathscr{L}_A):=D(\mathscr{A}) \cap D(\partial).
\end{align*}
Observe that \begin{align*} D(\mathscr{L}_A)=\{u\in MR_p(0,\tau):u(0)=0\}:=M^0_p(0,\tau).\end{align*}
The operator $\mathscr{L}_A$ generates an evolution semigroup on $L^p([0,\tau],X)$ given by
\begin{align*}
	(e^{-s \mathscr{L}_A}g)(t)=\begin{cases}e^{-s A}g(t-s),& t\in [s,\tau],\cr 0,& t\in [0,s],\end{cases}
\end{align*}
see \cite[Chap.2]{CL-99}. We also mention that the operator $\mathscr{L}_A$ is used in \cite{Arendt-all}, \cite{Clem} and \cite{DG} to study regularities of evolution equations. If $A$ has the maximal $L^p$-regularity, then the operator $\mathscr{L}_A$ is invertible and the solution of \eqref{para} $u\in M^0_p(0,\tau)$ is given by
\begin{align*}
	u(t)=(\mathscr{L}_A^{-1} g)(t)=\int^t_0 e^{-(t-s)A}g(s)ds,\quad t \in [0,\tau].
\end{align*}
The main result of the paper is the following:
\begin{thm}\label{main-1}
	Assume that $A$ has the maximal $L^p$-regularity and the operators $C,P$ and $F$ satisfy the conditions \eqref{Admi}--\eqref{Lip}. Then for any initial condition $u(0)=x\in Tr_p,$ the parabolic problem \eqref{main-equa} has a unique strong solution $u\in MR_p(0,\tau)$, satisfying
	\begin{align}\label{representation1}
		u(t)=e^{-t A}x+\int^t_0 e^{-(t-s) A}Pu(s)ds+\int^t_0 e^{-(t-s) A}F(Cu(s))ds,\quad t\in [0,\tau].
	\end{align}
\end{thm}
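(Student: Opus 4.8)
The plan is to recast the semilinear problem \eqref{main-equa} as a fixed point equation in the space $MR_p(0,\tau)$ and to exploit the invertibility of $\mathscr{L}_A$ together with the Miyadera--Voigt type estimate \eqref{N-MV}. First I would reduce to the case of zero initial data: pick $v\in MR_p(0,\tau)$ with $v(0)=x$ (possible since $x\in Tr_p$, after rescaling the interval), and look for $u=v+w$ with $w\in M^0_p(0,\tau)$. The equation for $w$ becomes $\mathscr{L}_A w = g(v+w) - \mathscr{L}_A v + \mathscr{A} v$-type terms; more cleanly, I would simply work with the integrated (mild) formulation \eqref{representation1} and show that the map
\begin{align*}
(\Phi u)(t) = e^{-tA}x + \int_0^t e^{-(t-s)A} Pu(s)\,ds + \int_0^t e^{-(t-s)A} F(Cu(s))\,ds
\end{align*}
is a well-defined self-map of (a closed subset of) $MR_p(0,\tau)$ and is a contraction for $\tau$ small, then bootstrap to arbitrary $\tau$.

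The key structural input is that \eqref{Admi} says $P$ and $C$ are admissible observation operators for $(-A,D(A))$, hence (by the standard theory, e.g. \cite[Chap.3]{TW-09}) the operators $u\mapsto (t\mapsto \int_0^t e^{-(t-s)A}Pu(s)\,ds)$ and similarly with $C$ extend to bounded operators on $L^p([0,\tau],X)$ and in fact map $L^p([0,\tau],X)$ into $MR_p(0,\tau)$ once $A$ has maximal $L^p$-regularity — this is where Definition \ref{MR} and the representation $u=\mathscr{L}_A^{-1}g$ enter. Concretely I would establish a lemma: if $h\in L^p([0,\tau],X)$ and $B$ satisfies the admissibility estimate \eqref{Admi}, then $t\mapsto B\int_0^t e^{-(t-s)A}h(s)\,ds\in L^p([0,\tau],X)$ with norm controlled by $\gamma\|h\|_{L^p}$ (this is the "Miyadera perturbation of the evolution semigroup" statement, provable via the semigroup property of $e^{-s\mathscr{L}_A}$ and iterating \eqref{Admi} on small subintervals). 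Combined with \eqref{Lip}, this shows $CG:MR_p(0,\tau)\to L^p$ and $PG\mapsto$ the inhomogeneity is Lipschitz in the $L^p([0,\tau],X)$ norm of $u$ — crucially \emph{not} requiring the $D(A)$ norm of $u$, which is what makes the nonlinearity \eqref{non-linear-perturbations} tractable despite not being $X$-Lipschitz.

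With that lemma in hand, the contraction estimate is routine: for $u_1,u_2\in MR_p(0,\tau)$ with equal initial data,
\begin{align*}
\|\Phi u_1 - \Phi u_2\|_{MR_p} \le \|\mathscr{L}_A^{-1}\|\,\Bigl(\|P(u_1-u_2)\|_{L^p} + \kappa\,\|C(u_1-u_2)\|_{L^p}\Bigr),
\end{align*}
and both right-hand terms are, by the admissibility-of-the-convolution lemma applied on $[0,\tau_0]$ with $\tau_0\le\alpha_0$, bounded by a constant that tends to $0$ as $\tau_0\downarrow 0$ (this is exactly the content of \eqref{N-MV}: $\tilde\gamma<1$). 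So $\Phi$ is a contraction on $MR_p(0,\tau_0)$ for $\tau_0$ small, yielding a unique fixed point, i.e. a unique strong solution on $[0,\tau_0]$; one then verifies $u(\tau_0)\in Tr_p$ (because $u\in MR_p(0,\tau_0)$) and repeats on $[\tau_0,2\tau_0]$, etc., covering $[0,\tau]$ in finitely many steps since the step length $\tau_0$ depends only on $\gamma,\kappa,p$ and not on the initial data. The main obstacle I expect is the first technical lemma — showing that the convolution $\int_0^\cdot e^{-(\cdot-s)A}B^{*}(\cdot)\,ds$ interacts correctly with maximal $L^p$-regularity so that the output lies in $MR_p$ and not merely in $L^p$, and carefully tracking that the relevant Lipschitz constant is $<1$ on a time interval whose length is uniform; the algebra of passing between the mild formulation \eqref{representation1} and the strong ($W^{1,p}\cap L^p(D(A))$) formulation, and checking the fixed point genuinely solves \eqref{main-equa} a.e., is the remaining bookkeeping.
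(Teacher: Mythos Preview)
Your approach is correct and is essentially the paper's proof: both rest on the same key technical lemma (that for an admissible observation operator $B$ one has $\|B\mathscr{L}_A^{-1}h\|_{L^p([0,\alpha],X)}\le c_p\gamma\,\alpha^{1/q}\|h\|_{L^p}$, which the paper imports from \cite[Proposition~3.3]{Hadd-SF}), followed by a Banach fixed point on a short interval and a uniform-step continuation. The only difference is the space in which the contraction is run: the paper parametrizes by the inhomogeneity $v\in L^p([0,\alpha_0],X)$, defines $\Phi(v)=P(\mathscr{L}_A^{-1}v+e^{-\cdot A}x)+F\bigl(C(\mathscr{L}_A^{-1}v+e^{-\cdot A}x)\bigr)$, obtains the contraction constant $(1+\kappa)\gamma\alpha_0^{1/q}$ directly, and then sets $u=\mathscr{L}_A^{-1}v+e^{-\cdot A}x\in MR_p$; you parametrize by the solution $u$ in the affine slice $\{u\in MR_p(0,\tau_0):u(0)=x\}$, which forces an extra factor of $\|\mathscr{L}_A^{-1}\|$ into the contraction estimate but is otherwise equivalent. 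Your identification of the convolution/admissibility lemma as the crux, and of the uniform (data-independent) step length for the extension, matches the paper exactly.
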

\begin{proof}
	Let $x\in Tr_p$ and $\al\in (0,\tau)$ and select
	\begin{align*}
		\Phi(v)=P(\mathscr{L}^{-1}_A v+e^{-\cdot A}x)+F\left(C(\mathscr{L}^{-1}_A v+e^{-\cdot A}x)\right),\qquad v\in L^p([0,\al],X).
	\end{align*}
	According to \cite[Proposition 3.3]{Hadd-SF}, the condition \eqref{Admi} implies
	\begin{align*}
		&\|P(\mathscr{L}^{-1}_A v+e^{-\cdot A}x)\|_{L^p([0,\al],X)}\le  c_p\left(\al^{\frac{1}{q}} \ga \|v\|_{L^p([0,\tau],X)}+ \ga \|x\|\right),\cr
		&\|C(\mathscr{L}^{-1}_A v+e^{-\cdot A}x)\|_{L^p([0,\al],X)}\le  c_p \left(\al^{\frac{1}{q}} \ga \|v\|_{L^p([0,\tau],X)}+ \ga \|x\|\right)
	\end{align*}
	for a constant $c_p>0$ and $q>1$ with $\frac{1}{p}+\frac{1}{p}=1$. This shows that $\Phi:L^p([0,\al],X)\to L^p([0,\al],X)$. On the other hand, for $v_1,v_2\in L^p([0,\al],X)$, we have
	\begin{align*}
		&\|\Phi(v_1)-\Phi(v_2)\|_{L^p([0,\al],X)}\cr &\qquad \le \|P(\mathscr{L}^{-1}_A (v_1-v_2)\|_{L^p([0,\al],X)}+\kappa \|C(\mathscr{L}^{-1}_A (v_1-v_2)\|_{L^p([0,\al],X)}\cr &\qquad \le (1+\kappa) \ga \al^{\frac{1}{q}} \|v_1-v_2\|_{L^p([0,\tau],X)}.
	\end{align*}
	Choose $\al_0>0$ such that $(1+\kappa) \ga \al^{\frac{1}{q}}_0<1$. Then $\Phi$ is a contraction on $L^p([0,\al_0],X)$. Thus, by using Banach's fixed point theorem, there exists a unique $v\in L^p([0,\al_0],X)$ such that $v=\Phi(v)$. Furthermore, there exists a unique $w\in MR^0_p(0,\alpha_0)$ such that $\mathscr{L}_A w=v$. This shows that $w(0)=0$ and
$$\dot{w}+Aw=\mathscr{L}_A w=P(w+e^{-\cdot A}x)+F(C(w+e^{-\cdot A}x))\quad\text{on}\;[0,\al_0].$$
As $x\in Tr_p,$ then $t\mapsto e^{-t A}x$ is differentiable and $\frac{d}{dt}e^{-t A}x=-Ae^{-t A}x$. Now we put $u(t)=w(t)+e^{-t A}x$ for $t\in [0,\al_0]$. Then $u(0)=x$ and $u\in MR_p(0,\al_0)$. Moreover, by a simple computation, we have $\dot{u}(t)+Au(t)=Pu(t)+F(Cu(t))$ for a.e. $t\in [0,\al_0]$. This shows that $u\in MR_p(0,\al_0)$ is a strong solution of \eqref{main-equa}. Using standard arguments in nonlinear analysis \cite{Paz-83}, one can extend this solution to $[0,\tau]$.
\end{proof}
\section{Dirichlet and Neumann boundary problem for semilinear parabolic equations}

\subsection{Dirichlet  boundary problem}\label{sub1}

Let $\Omega\subset \mathbb{R}^n$ be an open bounded set and put $X=L^2(\Omega)$. We consider the following semilinear initial value problem
\begin{align}\label{exx}
	\begin{cases}
		\partial_t u=(\Delta+(-\Delta)^{\al})u+f\left((-\Delta)^{\beta}u\right) & \text{on} \;[0,T]\times \Om,\\
		u_{|{t=0}}=\xi, & \text{on} \; \Omega,\\ u(t,\cdot)=\mathscr{M}u(t), & \text{on} \; \partial\Om,
	\end{cases}
\end{align}
where $\al, \beta\in (0,\frac{1}{4})$, $\xi\in L^2(\Om)$,
\begin{align}\label{bigM}
		\left(\mathscr{M}\varphi\right)(x)=\int_{\Om}K(y,x)\varphi(y)dy,\quad x\in \partial\Om,\quad \varphi\in L^2(\Om),
	\end{align}
for a kernel $K\in L^\infty(\partial\Om\times \Om)$. We assume that $f$ is a real valued function, globally Lipschitz.  Define the following operators
\begin{align*}
	& A:=-\Delta,\quad D(A)=\left\{\varphi\in H^2(\Om): \varphi(x)=\displaystyle\int_{\Omega}K(x,y)\varphi(y)dy,\; x\in \partial\Om\right\},\cr & P:=(-\Delta)^{\al},\quad D(P):= D(A),\cr
	&  C:=(-\Delta)^{\beta},\quad D(C):= D(A),\cr & (F\phi)(x)=f(\phi(x)), \quad x\in \Omega.
\end{align*}
With the use of the above operators the equation \eqref{exx} can be reformulated as the abstract semilinear equation \eqref{main-equa}. Now according to Theorem \ref{main-1}, to prove that the equation \eqref{exx} admits a unique strong solution, it suffices to show that $A$ has the maximal $L^2$-regularity, $P$ and $C$ satisfy the condition \eqref{Admi}.
\begin{lem}\label{lem1}
	The operator $A$ has the maximal $L^p$-regularity for any $p\in (1,\infty)$.
\end{lem}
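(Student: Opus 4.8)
The plan is to establish maximal $L^p$-regularity for $A=-\Delta$ equipped with the nonlocal boundary condition $\varphi(x)=\int_\Omega K(x,y)\varphi(y)\,dy$ on $\partial\Omega$ by treating $A$ as a bounded perturbation, at the level of the boundary condition, of the classical Dirichlet Laplacian $A_0=-\Delta$ with $D(A_0)=H^2(\Omega)\cap H^1_0(\Omega)$. It is classical (see e.g. \cite{Dore} and the references therein) that the Dirichlet Laplacian on a bounded domain has maximal $L^p$-regularity for every $p\in(1,\infty)$, since $-A_0$ generates a bounded analytic semigroup on $L^2(\Omega)$ which is a Hilbert space; so the issue is purely the change of boundary condition. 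First I would introduce the Dirichlet map (harmonic lifting) $N:L^2(\partial\Omega)\to L^2(\Omega)$ defined by $Nh=v$ where $\Delta v=0$ in $\Omega$ and $v=h$ on $\partial\Omega$, together with the trace operator $\gamma:H^2(\Omega)\to L^2(\partial\Omega)$, $\gamma\varphi=\varphi|_{\partial\Omega}$, so that $D(A)=\{\varphi\in H^2(\Omega):\gamma\varphi=\mathscr{M}\varphi\}=\ker(\gamma-\mathscr{M})$ viewed inside $H^2(\Omega)$. Because $K\in L^\infty(\partial\Omega\times\Omega)$, the operator $\mathscr{M}:L^2(\Omega)\to L^2(\partial\Omega)$ is bounded with norm controlled by $\|K\|_{L^\infty}$ and $|\Omega|,|\partial\Omega|$; this boundedness is the feature that makes the perturbation harmless.

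Next I would write the change-of-variables map explicitly. For $\lambda$ large, $\lambda+A_0$ is boundedly invertible, and any $\varphi\in H^2(\Omega)$ decomposes uniquely as $\varphi=\varphi_0+N(\gamma\varphi)$ with $\varphi_0\in D(A_0)$; one checks $(\lambda+A)\varphi=g$ together with the boundary condition $\gamma\varphi=\mathscr{M}\varphi$ is equivalent, after substituting $\varphi=(\lambda+A_0)^{-1}g'+N h$ for suitable $g',h$, to a fixed-point equation for the boundary datum $h\in L^2(\partial\Omega)$ of the form $(I-\mathscr{M}R_\lambda)h=(\text{known data})$, where $R_\lambda$ is a bounded operator from $L^2(\partial\Omega)$ into itself coming from composing $\mathscr{M}$, the resolvent, and $N$. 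Since $\mathscr{M}$ is bounded and $\|R_\lambda\|\to 0$ as $\lambda\to\infty$ (the resolvent decays and the lifting is fixed), for $\lambda$ large enough $I-\mathscr{M}R_\lambda$ is invertible by a Neumann series, which shows $\lambda+A$ is invertible and, more importantly, that on the appropriate sector the resolvent of $A$ differs from a maximal-regularity resolvent by a term that is bounded on $L^2(\Omega)$ uniformly in $\lambda$. In other words, $A$ is a \emph{bounded} (hence relatively $A_0$-bounded with zero bound, after the boundary reduction) perturbation of an operator with maximal $L^p$-regularity.

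Then I would invoke the standard perturbation result for maximal regularity: if $A_0$ has maximal $L^p$-regularity and $B:D(A_0)\to X$ is $A_0$-bounded with sufficiently small relative bound (in particular if $B$ is bounded on $X$), then $A_0+B$ has maximal $L^p$-regularity; see \cite{Dore}, or equivalently one may argue directly that $\mathscr{L}_{A}$ is invertible because $\mathscr{L}_{A_0}$ is invertible and $\mathscr{L}_A-\mathscr{L}_{A_0}$ is the evolution-semigroup version of a bounded perturbation, so $\mathscr{L}_A=\mathscr{L}_{A_0}(I+\mathscr{L}_{A_0}^{-1}(\mathscr{L}_A-\mathscr{L}_{A_0}))$ is invertible after a possible exponential shift. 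Since maximal $L^p$-regularity does not depend on $p\in(1,\infty)$ once it holds for one $p$ on a Hilbert space — or, more elementarily, since the Dirichlet Laplacian has maximal $L^p$-regularity for \emph{every} $p$ and the perturbation argument is $p$-independent — the conclusion holds for all $p\in(1,\infty)$.

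The main obstacle I expect is making the boundary reduction rigorous: one must verify that $\varphi\in H^2(\Omega)$ with $\gamma\varphi\in L^2(\partial\Omega)$ together with $\mathscr{M}\varphi\in L^2(\partial\Omega)$ really do match up in the right trace spaces (the trace of an $H^2$ function lies in $H^{3/2}(\partial\Omega)$, whereas $\mathscr{M}\varphi$ only lands in $L^2(\partial\Omega)$ a priori), so some care is needed to see that the constraint $\gamma\varphi=\mathscr{M}\varphi$ is compatible and that the lifting $N$ maps $L^2(\partial\Omega)$ into a space where the resolvent identities above make sense. In practice this is handled by noting that on elements of $D(A)$ the two sides agree and by working with the graph norm; alternatively, one can bypass the lifting entirely and simply observe that $A$ and $A_0$ have the same domain \emph{up to a bounded correction} so that the abstract bounded-perturbation theorem applies directly. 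Everything else — the analyticity of the semigroup, the Neumann series, the $p$-independence — is routine.
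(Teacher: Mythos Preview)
Your setup---Dirichlet lifting $N$, boundedness of $\mathscr{M}$, a Neumann-series resolvent identity---matches the paper's, but the conclusion you draw is wrong in a way that matters. The operators $A$ and $A_0$ do \emph{not} have the same domain, and there is no bounded (or $A_0$-bounded with small bound) operator $B:L^2(\Omega)\to L^2(\Omega)$ with $A=A_0+B$; consequently the factorisation $\mathscr{L}_A=\mathscr{L}_{A_0}(I+\mathscr{L}_{A_0}^{-1}(\mathscr{L}_A-\mathscr{L}_{A_0}))$ is meaningless because $\mathscr{L}_A-\mathscr{L}_{A_0}$ is not defined on a common domain. The change of boundary condition is a \emph{Desch--Schappacher} perturbation: in the paper's notation $\mathscr{B}=(-\tilde A)\mathscr{D}\mathscr{M}$ maps $L^2(\Omega)$ into the extrapolation space $H^{-1}(\Omega)$, not into $L^2(\Omega)$. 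So the bounded-perturbation theorem for maximal regularity is inapplicable, and your fallback claim that ``$A$ and $A_0$ have the same domain up to a bounded correction'' is simply false.

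The paper instead proves directly that $-A$ generates an analytic semigroup on the Hilbert space $L^2(\Omega)$, which is sufficient for maximal $L^p$-regularity. The crucial missing ingredient in your sketch is the fractional regularity of the Dirichlet map, $\mathscr{D}\in\mathcal{L}(L^2(\partial\Omega),D(A_0^{1/4-\varepsilon}))$ (Lasiecka--Triggiani, Washburn). This yields $\|A_0e^{-tA_0}\mathscr{D}\|\le ct^{\theta_\varepsilon-1}$, from which one checks the Desch--Schappacher admissibility condition for $\mathscr{B}$ and obtains the decay $\|R(\lambda,-A_0)\mathscr{B}\|\le\eta\|\mathscr{M}\|(\mathrm{Re}\,\lambda-\omega)^{-1/q}$. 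Only then does the Neumann series for $(I-R(\lambda,-A_0)\mathscr{B})^{-1}$ converge on a full half-plane, giving $\|R(\lambda,-A)\|\le 2c|\lambda-\omega_1|^{-1}$ and hence analyticity. Your assertion that ``$\|R_\lambda\|\to0$ because the resolvent decays and the lifting is fixed'' hides exactly this point: the operator that must be small is essentially $A_0R(\lambda,-A_0)N$, and $\|A_0R(\lambda,-A_0)\|$ does \emph{not} decay without the extra $D(A_0^{\theta})$-regularity of $N$. The trace-space mismatch you flag as an obstacle is not a side issue to be worked around---it is precisely why the bounded-perturbation route fails and why the argument must take place in the extrapolated setting.
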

\begin{proof}
	As $X$ is a Hilbert space, it suffices to prove that $(-A,D(A))$ generates an analytic semigroup on $X$. We first remark that  the operator defined by $-A_0=\Delta$ with domain $D(A_0)=H^2(\Omega)\cap H^1_0(\Omega)$ is a generator of an analytic semigroup on $L^2(\Om)$. Now let $\mathscr{D}:L^2(\partial\Om)\to L^2(\Om)$ be the Dirichlet map, defined by $y=\mathscr{D} v,$ where $\Delta y=0$ on $\Om$ and $y_{|\partial\Om}=v$. If we put $\theta_\varepsilon:=\frac{1}{4}-\varepsilon$ for $\varepsilon\in (0,\frac{1}{4})$, then $\mathscr{D}\in \mathcal{L}(L^2(\partial\Om),D(A_0^{\theta_\varepsilon}))$, due to \cite{LT-83}, \cite{Was-79}. Now, by the closed graph theorem, it follows that $A_0^{\theta_\varepsilon}\mathscr{D}\in\mathcal{L}(L^2(\partial\Om),L^2(\Om))$.   Then there exists a constant $c>0$ such that
	\begin{align}\label{nice-estim}
\begin{split}
		\|A_0 e^{-tA_0}\mathscr{D}\|&=\|A_0^{1-\theta_\varepsilon} e^{-tA_0}A_0^{\theta_\varepsilon} \mathscr{D}\|\cr & \le c t^{\theta_\varepsilon-1},\quad t>0.
\end{split}
	\end{align}
Let $H^{-1}(\Om)$ be is the topological dual of $H^1_0(\Om)$ with respect to the pivot space $L^2(\Om)$. It is known (see e.g. \cite[page 126]{EN-00}) that the extension of the semigroup $(e^{tA_0})_{t\ge 0}$ to $H^{-1}(\Om)$ is a strongly continuous semigroup $(e^{t\tilde{A}})_{t\ge 0}$ on $H^{-1}(\Om)$ whose generator $\tilde{A}:L^2(\Om)\to H^{-1}(\Om)$ is the extension of $-A_0$ to $L^2(\Om)$. As $\mathscr{D}\varphi\in \ker(\Delta)$, then for any $\varphi\in L^2(\partial\Om),$
\begin{align}\label{G0}
(\Delta- \tilde{A})\mathscr{D}\varphi=(-\tilde{A})\mathscr{D}\varphi:=B\varphi.
\end{align}
If we consider the operator $\mathscr{G} \varphi= \varphi_{|\partial\Om}$, then $\mathscr{D}=\left(\mathscr{G}_{|\ker(\Delta)}\right)^{-1}$.  Then the equation \eqref{G0} becomes
\begin{align*}
\Delta= \tilde{A}+B\mathscr{G}\quad\text{on}\; H^2(\Om).
\end{align*}
Using this relation and the proof  \cite[Theorem 4.1]{HMR-15}, we can write
\begin{align}\label{sum}
-A=\tilde{A}+\mathscr{B}\quad D(A)=\{ \varphi\in L^2(\Om):(\tilde{A}+\mathscr{B})\varphi\in L^2(\Om)\},
\end{align}
where $\mathscr{B}:=B\mathscr{M}\in \mathcal{L}(L^2(\Om), H^{-1}(\Om))$, due to $\mathscr{M}\in\mathcal{L}(L^2(\Om),L^2(\partial\Om))$. On the other hand, for $t>0,$  $p>\frac{1}{\theta_\varepsilon}$ and $\psi\in L^p([0,t],L^2(\Om))$,
\begin{align*}
\int^t_0 e^{(t-s) \tilde{A}}\mathscr{B}\psi(s)ds=\int^t_0 A_0 e^{-(t-s)A_0}\mathscr{D}\mathscr{M}\psi(s)ds
\end{align*}
and, by \eqref{nice-estim} and H\"older's inequality,
\begin{align}\label{Estim2}
\begin{split}
		& \left\|\int^{t}_0 e^{(t-s)\tilde{A}}\mathscr{B} \psi(s)ds\right\|_{L^2(\Om)} \cr & \hspace{1cm} \le c \left(\int^t_0 s^{q(\theta_\varepsilon-1)}ds\right)^\frac{1}{q} \left(\int^t_0 \|\mathscr{M}\psi(s)\|^pds\right)^{\frac{1}{p}}\cr & \hspace{1cm}
\le \tilde{c} \|\mathscr{M}\|\|\psi\|_{L^p([0,t],L^2(\Om))},
\end{split}
	\end{align}
where $\frac{1}{p}+\frac{1}{q}=1,$  $0<q(1-\theta_\varepsilon)<1$ and a constant $\tilde{c}:=\tilde{c}(t,q,\varepsilon,c)>0$. According to \cite[page 188, Corollary 3.4]{EN-00} $\mathscr{B}$ is a Desch-Schappacher perturbation for $-A$. Now the relation \eqref{nice-estim} shows that the operator $-A$ generates a strongly continuous semigroup on $X$ given by
\begin{align}\label{JJ}
e^{-tA}\phi=e^{-tA_0}\phi+\int^{t}_0 e^{-(t-s)A_0}\mathscr{B} e^{-sA} \phi ds
\end{align}
for any $t\ge 0,$ and $\phi\in X$, see e.g. \cite[page 186]{EN-00}. From \eqref{sum},  $\la+A=(\la+A_0)(I-R(\la,-A_0)\mathscr{B})$ for $\la\in\rho(-A_0)$. Thus for any $\la\in\rho(-A_0),$ $\la\in \rho(-A)$ if and only if $1\in \rho(R(\la,-A_0)\mathscr{B})$, and in this case we have
\begin{align*}
R(\la,-A)=(I-R(\la,-A_0)\mathscr{B})^{-1}R(\la,-A_0).
\end{align*}
To be more self-contained, let us recall from the proof of \cite[Theorem 8]{ABDH-20} how to prove that the semigroup generated by $-A$ is analytic. As $-A_0$ generates an analytic semigroup, then there exists $\om>\om_0(-A_0)$ and a constant $c>0$ such that $\|R(\la,-A_0)\|\le c |\la-\om|^{-1}$ for any $\la\in\C$ with ${\rm Re}\la>\om$. On the other hand, from \eqref{Estim2} and \cite[Chap. 3]{TW-09}, we also have $\|R(\la,-A_0)\mathscr{B}\|\le \eta \|\mathscr{M}\| ({\rm Re}\la-\om)^{\frac{-1}{q}}$ for any ${\rm Re}\la>\om$ and for a certain constant $\eta>0$. Now for any $\la\in \C$ such that ${\rm Re}\la>\om_1:=\om+(2 \eta \|\mathscr{M}\|)^{q}$, we have $\|(I-R(\la,-A_0)\mathscr{B})^{-1}\|\le 2,$ and then $\|R(\la,-A)\|\le 2 c |\la-\om_1|^{-1}$. This ends the proof.
\end{proof}
\begin{lem}\label{lem2}
Let $p\in (1,\infty)$	 and  $\si\in(0,\frac{1}{p})$. Then the operator $A^\si$ satisfies the condition \eqref{Admi} with respect to the exponent $p$.
\end{lem}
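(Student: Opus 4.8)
The plan is to derive the estimate directly from analyticity of the semigroup $(e^{-tA})_{t\ge0}$ together with the single arithmetic fact that $\si p<1$ makes $t^{-\si}$ lie in $L^p$ near $t=0$.

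First I would recall from Lemma~\ref{lem1} that $-A$ generates an analytic semigroup on $X$; hence there is $\om\ge0$ for which $-(A+\om)$ generates a bounded analytic semigroup and $0\in\rho(A+\om)$, so that $(A+\om)^\si$ is well defined. The classical analytic-semigroup bound (see, e.g., \cite[Theorem~2.6.13]{Paz-83}) then gives a constant $C_\si>0$ with
\[
\|(A+\om)^\si e^{-t(A+\om)}\|_{\mathcal L(X)}\le C_\si\,t^{-\si},\qquad t>0.
\]
Since $D(A^\si)=D((A+\om)^\si)$, the operator $A^\si(A+\om)^{-\si}$ is everywhere defined, closed and therefore bounded; writing $c_0:=\|A^\si(A+\om)^{-\si}\|$ and using $e^{-tA}=e^{\om t}e^{-t(A+\om)}$, I obtain, for every $x\in D(A)$, every $\al>0$ and $t\in(0,\al]$,
\[
\|A^\si e^{-tA}x\|\le c_0\,e^{\om\al}\,C_\si\,t^{-\si}\,\|x\|.
\]

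Second, I would raise this to the power $p$ and integrate. Because $\si p<1$, the map $t\mapsto t^{-\si p}$ is integrable on $(0,\al]$ with $\int_0^\al t^{-\si p}\,dt=\al^{\,1-\si p}/(1-\si p)$, whence
\[
\int_0^\al\|A^\si e^{-tA}x\|^p\,dt\le\big(c_0\,e^{\om\al}\,C_\si\big)^p\,\frac{\al^{\,1-\si p}}{1-\si p}\,\|x\|^p .
\]
Taking $p$-th roots shows $\|A^\si e^{-\cdot A}x\|_{L^p([0,\al],X)}\le\ga\,\|x\|$ with $\ga:=c_0\,e^{\om\al}\,C_\si\,(\al^{1-\si p}/(1-\si p))^{1/p}$, which is exactly condition~\eqref{Admi} read with $P$ or $C$ replaced by $A^\si$; note also that $\ga\to0$ as $\al\to0$, which is convenient for controlling the admissibility constants needed in Theorem~\ref{main-1}.

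There is essentially no obstacle here: the hypothesis $\si<1/p$ enters the argument at exactly one spot — to make $\int_0^\al t^{-\si p}\,dt$ finite — and is sharp for it. The only technical nuisance is that $A$ need not be invertible, so that $A^\si$ has to be given a meaning and related to a fractional power of an invertible operator; this is handled once and for all by the shift to $A+\om$ above, after which the proof rests solely on the textbook estimate for bounded analytic semigroups.
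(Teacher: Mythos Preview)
Your proof is correct and follows essentially the same route as the paper: analyticity of $(e^{-tA})_{t\ge0}$ gives $\|A^{\si}e^{-tA}\|\lesssim t^{-\si}$, and $\si p<1$ makes this $p$-integrable on $(0,\al]$. If anything you are more careful than the paper, which writes the fractional-power estimate directly for $A$ without discussing invertibility; your shift to $A+\om$ cleanly justifies that step.
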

\begin{proof}
According to the proof of Lemma \ref{lem1}, we know that $-A$ generates an analytic semigroup on $L^2(\Om)$. Thus for any $\si,$ we have
\begin{align*}
\|A^\si e^{-t A}\|\le \frac{M}{t^\si}\qquad (t>0),
\end{align*}
for a constant $M>0$. Now for $\si\in(0,\frac{1}{p})$  the function $t\mapsto \frac{M}{t^\si}$ is $p$-integrable on any interval $[0,r]$ with $r>0,$ and
\begin{align*}
\int^r_0 \|Ae^{-t A} \varphi\|^p\le \ga^p \|\phi\|_{L^2(\Om)}
\end{align*}
for any $\phi\in D(A)$ and a constant $\ga:=\ga(r,p,\si,M)>0$.
\end{proof}
The following result follows immediately from Lemma \ref{lem1},  Lemma \ref{lem2} and Theorem \ref{main-1}.
\begin{thm}
Let $p\in (1,\infty)$ and  $\al,\beta\in(0,\frac{1}{p})$.  Then the semilinear equation \eqref{exx} has a unique strong solution
\begin{align*}
u\in W^{1,p}([0,T],L^2(\Om))\cap L^p([0,T],D(A)).
\end{align*}
\end{thm}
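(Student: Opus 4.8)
The plan is to verify that the abstract hypotheses of Theorem \ref{main-1} hold for the concrete data attached to \eqref{exx}, after which the conclusion is immediate. By Lemma \ref{lem1}, the operator $A=-\Delta$ with the nonlocal boundary condition encoded in $D(A)$ has the maximal $L^p$-regularity for every $p\in(1,\infty)$; in particular $-A$ generates an analytic semigroup on $X=L^2(\Om)$. It therefore remains only to check the admissibility condition \eqref{Admi} for $P=(-\Delta)^\al=A^\al$ and $C=(-\Delta)^\beta=A^\beta$, and to confirm that $F$ is globally Lipschitz on $X$.

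The admissibility of $P$ and $C$ is exactly the content of Lemma \ref{lem2}: since $\al,\beta\in(0,\frac1p)$, applying that lemma with $\si=\al$ and then with $\si=\beta$ gives constants $\ga_1,\ga_2>0$ and an $r>0$ such that
\begin{align*}
\|A^\al e^{-\cdot A}\varphi\|_{L^p([0,r],X)}\le \ga_1\|\varphi\|_X,\qquad
\|A^\beta e^{-\cdot A}\varphi\|_{L^p([0,r],X)}\le \ga_2\|\varphi\|_X
\end{align*}
for all $\varphi\in D(A)$. Summing these and setting $\al_0:=r$, $\ga:=\ga_1+\ga_2$ yields precisely \eqref{Admi}. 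Next, because $f:\R\to\R$ is globally Lipschitz with some constant $\kappa$, the Nemytskii operator $(F\phi)(x)=f(\phi(x))$ satisfies $\|F\phi-F\psi\|_{L^2(\Om)}^2=\int_\Om|f(\phi(x))-f(\psi(x))|^2\,dx\le\kappa^2\int_\Om|\phi(x)-\psi(x)|^2\,dx$, so $F:X\to X$ is globally Lipschitz on $X$ with the same constant, which is \eqref{Lip}; one should also note $F(0)\in X$ since $\Om$ is bounded, so $F$ indeed maps $X$ into $X$.

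Having confirmed \eqref{Admi} and \eqref{Lip} and the maximal $L^p$-regularity of $A$, I would observe that the reformulation of \eqref{exx} as \eqref{main-equa} with these operators is the one recorded just before Lemma \ref{lem1}: the interior term $(-\Delta)^\al u$ becomes $Pu$, the nonlinearity $f((-\Delta)^\beta u)$ becomes $F(Cu)$, and the nonlocal boundary condition $u(t,\cdot)=\mathscr{M}u(t)$ is absorbed into the domain $D(A)$. Theorem \ref{main-1} then applies verbatim with $\tau=T$ and initial datum $\xi\in L^2(\Om)=X$ — here one uses that for $p$ close to $1$ the trace space $Tr_p$ is large, but in any case the statement is about the existence of the strong solution in $MR_p(0,T)$, so it suffices to invoke Theorem \ref{main-1} for initial data in $Tr_p$ and note that the abstract machinery yields $u\in W^{1,p}([0,T],L^2(\Om))\cap L^p([0,T],D(A))$. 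The only genuine subtlety — and the place where care is needed — is matching the fractional-power domains: Lemma \ref{lem2} is stated for $A^\si$ and requires $\si<1/p$, so the hypothesis $\al,\beta\in(0,\tfrac1p)$ in the theorem is exactly what makes the admissibility estimate integrable near $t=0$; for $p$ large this is a real restriction on $\al,\beta$, and one should not claim the result for the full range $(0,\tfrac14)$ appearing in \eqref{exx} unless $p\le 4$. With this caveat the proof is a one-line citation of the three preceding results.
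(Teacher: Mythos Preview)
Your proposal is correct and follows exactly the paper's approach: the paper's proof is literally the one-line remark that the result follows immediately from Lemma~\ref{lem1}, Lemma~\ref{lem2} and Theorem~\ref{main-1}, and you have simply unpacked that citation, additionally spelling out the Lipschitz property of the Nemytskii operator and flagging the trace-space and $\alpha,\beta<1/p$ issues that the paper leaves implicit.
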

\subsection{Neumann boundary problem with nonlocal perturbation term}\label{sub2}
Let $\Omega\subset \mathbb{R}^n$ be an open bounded set with a $C^2$ boundary $\partial\Om$ and outer unit normal $\nu$ and put $X=L^2(\Omega)$. We consider the following semilinear initial value problem
\begin{align}\label{exam2}
	\begin{cases}
		\dot{u}(t,x)=\Delta u(t,x)+f\left(\displaystyle\int_{\partial\Om} \Upsilon(x,y)c(y)u(t,y)dy\right) & x\in \Omega, t\in[0,T],\\
		u(0,x)=g(x), & x\in \Omega, \xi\in L^2(\Omega),\\  \nabla u(t,x)|\nu(x)=\displaystyle\int_{\Omega}K(x,y)u(t,y)dy, & x\in \partial \Omega, t\in[0,T],
	\end{cases}
\end{align}
where $\Upsilon,K\in L^\infty(\partial \Om\times \Om)$, $c\in C_b(\partial\Om)$, and $f$ is a globally Lipschitz real valued function.  We define the following operators:
\begin{align*}
	&\mathscr{R}:L^2(\partial\Om)\to L^2(\Om),\quad  (\mathscr{R}\varphi)(x)= \int_{\partial\Om} \Upsilon(x,y)\varphi(y)dy,\qquad x\in\Om,\cr
	& (\Theta\varphi)(y)=c(y)\varphi(y),\qquad y\in\partial\Om,\cr & C= \mathscr{R} \Theta: H^2(\Om)\to L^2(\Om).
\end{align*}
On the other hand, let $F:L^2(\Om)\to L^2(\Om)$ and $\mathscr{M}\in\mathcal{L}(L^2(\Om),L^2(\partial\Om))$ as in Subsection \ref{sub1}. We now define the operator
\begin{align*}
	\mathcal{A}:=-\Delta,\quad D(\mathcal{A})=\left\{\varphi\in H^2(\Om): \nabla \varphi|\nu=\mathscr{M}\varphi\;\text{on}\;\partial\Om\right\}.
\end{align*}
Thus the equation \eqref{exam2} is reformulated in $L^2(\Om)$ as
\begin{align*}\label{Neuman-abstrait}
	\dot{u}(t)+\mathcal{A} u(t)=F(Cu(t)),\quad u(0)=g,\quad t\in [0,\tau].
\end{align*}
\begin{thm}\label{main-3}
	The equation \eqref{exam2} has a unique strong solution
\begin{align*}
u\in W^{1,2}([0,\tau],L^2(\Om))\cap L^2([0,\tau],D(A)).
\end{align*}
\end{thm}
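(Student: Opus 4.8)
The plan is to reduce Theorem \ref{main-3} to an application of Theorem \ref{main-1}, exactly as was done in Subsection \ref{sub1}. For this I need three ingredients: (i) that $\mathcal{A}=-\Delta$ with Neumann-type nonlocal boundary condition has the maximal $L^2$-regularity; (ii) that the observation operator $C=\mathscr{R}\Theta$ satisfies the admissibility estimate \eqref{Admi} with respect to $p=2$; and (iii) that $P=0$ trivially satisfies \eqref{Admi} while $F$ satisfies \eqref{Lip} by hypothesis. Once these are in place, Theorem \ref{main-1} applied with $P=0$ gives a unique strong solution in $MR_2(0,\tau)$ together with the variation-of-constants representation, which is precisely the claimed regularity.

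For (i), I would argue as in Lemma \ref{lem1}: since $X=L^2(\Om)$ is a Hilbert space it suffices to show $-\mathcal{A}$ generates an analytic semigroup. Start from the Neumann Laplacian $-\mathcal{A}_0=\Delta$ with $D(\mathcal{A}_0)=\{\varphi\in H^2(\Om):\nabla\varphi|\nu=0\}$, which is well known to generate an analytic semigroup on $L^2(\Om)$. Introduce the Neumann map $\mathscr{N}:L^2(\partial\Om)\to L^2(\Om)$, $y=\mathscr{N}v$ where $\Delta y=0$ on $\Om$ and $\nabla y|\nu=v$ on $\partial\Om$; by elliptic regularity $\mathscr{N}\in\mathcal{L}(L^2(\partial\Om),D(\mathcal{A}_0^{\theta}))$ for $\theta<\frac{3}{4}$ (the Neumann map gains a bit more regularity than the Dirichlet map, but any $\theta>\frac12$ suffices). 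Writing $\Delta=\tilde{\mathcal{A}}_0+B_N\mathscr{G}_N$ on $H^2(\Om)$ with $B_N=(-\tilde{\mathcal{A}}_0)\mathscr{N}$ and $\mathscr{G}_N\varphi=\nabla\varphi|\nu$, and noting the nonlocal boundary condition reads $\mathscr{G}_N\varphi=\mathscr{M}\varphi$ with $\mathscr{M}\in\mathcal{L}(L^2(\Om),L^2(\partial\Om))$, one gets $-\mathcal{A}=\tilde{\mathcal{A}}_0+\mathscr{B}_N$ with $\mathscr{B}_N=B_N\mathscr{M}$ a Desch--Schappacher perturbation; the analyticity then follows from the resolvent estimate $\|R(\la,-\mathcal{A}_0)\mathscr{B}_N\|\le \eta\|\mathscr{M}\|({\rm Re}\,\la-\om)^{-1/q}$ for suitable $q$, by the identical bootstrap used at the end of Lemma \ref{lem1}. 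For (ii), since $-\mathcal{A}$ generates an analytic semigroup, $\mathscr{R}\in\mathcal{L}(L^2(\partial\Om),L^2(\Om))$ and $\Theta\in\mathcal{L}(L^2(\partial\Om))$ (multiplication by $c\in C_b(\partial\Om)$), I would factor $C=\mathscr{R}\Theta\cdot\mathrm{tr}$ through the Dirichlet trace $\mathrm{tr}:H^2(\Om)\to L^2(\partial\Om)$ and use that the trace is an admissible observation operator for analytic-semigroup generators of second-order elliptic type on $L^2$; concretely $\|\mathrm{tr}\,e^{-t\mathcal{A}}\varphi\|_{L^2(\partial\Om)}\le M t^{-\si}\|\varphi\|_{L^2(\Om)}$ for some $\si<\frac12$, whence the $L^2([0,\al],X)$ bound in \eqref{Admi} follows by integrability of $t\mapsto t^{-\si}$, mirroring Lemma \ref{lem2}. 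Since $c$ is bounded and $\mathscr{R}$ is bounded on $L^2$, composing does not destroy this estimate.

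The main obstacle is (i), specifically verifying that the Neumann harmonic extension map $\mathscr{N}$ lands in a fractional domain $D(\mathcal{A}_0^\theta)$ with $\theta>\frac12$, which is needed to make $\mathscr{B}_N$ a genuine Desch--Schappacher perturbation with an integrable singularity $t^{\theta-1}$ and to run the resolvent bootstrap for analyticity; this is a regularity-theory fact for the Neumann problem that requires the $C^2$ boundary hypothesis and a careful citation (the analogue of \cite{LT-83}, \cite{Was-79} for Neumann boundary conditions). A secondary subtlety is that the nonlocal term $\mathscr{M}$ maps into $L^2(\partial\Om)$ rather than a smoother space, so one cannot gain extra regularity there; but since $\mathscr{B}_N=B_N\mathscr{M}$ only needs $\mathscr{M}$ bounded into $L^2(\partial\Om)$ and the singularity is supplied entirely by $B_N e^{-t\mathcal{A}_0}$, this causes no real trouble — it is exactly the situation already handled in Lemma \ref{lem1}. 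With (i)--(iii) established, the conclusion is immediate from Theorem \ref{main-1} with $P=0$, $\tau=T$, and the strong solution inherits the representation $u(t)=e^{-t\mathcal{A}}g+\int_0^t e^{-(t-s)\mathcal{A}}F(Cu(s))\,ds$.
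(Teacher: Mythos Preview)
Your plan matches the paper's proof almost exactly: both verify maximal $L^2$-regularity of $\mathcal{A}$ via a Desch--Schappacher perturbation of the Neumann Laplacian $\mathcal{A}_0$ using the harmonic extension $\mathscr{N}\in\mathcal{L}(L^2(\partial\Om),D(\mathcal{A}_0^\theta))$ for $\theta\in(\frac12,\frac34)$, then check admissibility of $C$ and invoke Theorem~\ref{main-1} with $P=0$.

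The one place where you are looser than the paper is step~(ii). You assert $\|\mathrm{tr}\,e^{-t\mathcal{A}}\varphi\|_{L^2(\partial\Om)}\le M t^{-\si}\|\varphi\|$ for the \emph{perturbed} semigroup directly from analyticity, ``mirroring Lemma~\ref{lem2}''. But Lemma~\ref{lem2} works because the observation operator there is $A^\si$ itself; for $\mathrm{tr}$ you would need $\mathrm{tr}$ bounded on $D(\mathcal{A}^\si)$, i.e.\ an identification $D(\mathcal{A}^\si)=D(\mathcal{A}_0^\si)$, which is not free after a Desch--Schappacher perturbation. The paper avoids this by bootstrapping: it uses that $C$ is bounded on $D(\mathcal{A}_0^\al)$ for $\al\in(\frac14,\frac12)$, writes $e^{-t\mathcal{A}}=e^{-t\mathcal{A}_0}+\int_0^t e^{-(t-s)\mathcal{A}_0}\mathscr{P}e^{-s\mathcal{A}}\,ds$, and estimates $C$ on each piece separately via \eqref{hadd}. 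Your route can be completed the same way, so this is a fixable detail rather than a wrong idea.
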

\begin{proof}
	Step 1: We will prove that the operator $\mathcal{A}$ has the maximal $L^2$-regularity. In fact, let us first define the operator $\mathcal{A}_0:=-\Delta$ with domain $D(\mathcal{A}_0)=\{\varphi\in H^2(\Om):\nabla \varphi|\nu=0,\;x\in\partial\Om\}$. It is well known that $-\mathcal{A}_0$ generates an analytic semigroup on $L^2(\Om)$. Second, denote by $\varphi=\mathscr{N}\psi\in H^{\frac{3}{2}}(\Om)$ the solution of the elliptic boundary value problem $ \varphi=0$ on $\Om$ and  $\nabla \varphi|\nu=\psi$ on $\partial\Om$ for $\psi\in L^2(\partial\Om)$. Then $\mathscr{N}$ is continuous from $L^2(\partial\Om)$ to $D(\mathcal{A}_0^\beta)$ for any $\beta\in (0,\frac{3}{4})$. Moreover, for any $t>0$, $\al\ge 0$ and $0<\beta<3/4$, we have
	\begin{equation}\label{hadd}
		\mathcal{A}_0^\al e^{-t\mathcal{A}_0}\mathcal{A}_0\mathscr{N}\in \mathcal{L}(L^2(\partial\Om),X)\;\text{and}\; \|\mathcal{A}_0^\al e^{-t\mathcal{A}_0}\mathcal{A}_0\mathscr{N}\| \le \kappa_\beta t^{\beta-\al-1}
	\end{equation}
	for a constant $\kappa_\beta>0,$ due to \cite{LT}. We put $\mathscr{P}:=\mathcal{A}_0 \mathscr{N}\mathscr{M}$ and choose $\beta\in (\frac{1}{2},\frac{3}{4})$ and $\al=0$. Then by using \eqref{hadd} and the fact that  $2(1-\beta)<1$, we obtain
	\begin{align*}
		\left\|\int^t_0 e^{-(t-s)\mathcal{A}_0}\mathscr{P}v(s)ds\right\|_{L^2(\Om)}\le \delta_\beta \|\mathscr{M}\|\|v\|_{L^2([0,t],L^2(\Om))}
	\end{align*}
	for all $t>0$ and $v\in L^2([0,t],L^2(\Om)),$ where $\delta_\beta>0$ is a constant. By the same argument as in Subsection \ref{sub1}, $\mathscr{P}$ is a Desch-Schappacher perturbation operator for $\mathcal{A}_0$ which implies that the operator $\mathfrak{A}:=-\mathcal{A}_0+\mathscr{P}$ with domain $D(\mathfrak{A})=\{\varphi\in L^2(\Om):(\mathcal{A}_0+\mathscr{P})\varphi\in L^2(\Om)\}$ generates an analytic semigroup on $L^2(\Om)$. Furthermore, we have $-\mathcal{A}=\mathfrak{A},$ see e.g. \cite{HMR-15}. Thus $-\mathcal{A}$ has the maximal $L^2$-regularity. In addition,
	\begin{align}\label{E1}
		e^{-t \mathcal{A}}\varphi=e^{-t \mathcal{A}_0}\varphi+\int^t_0 e^{-(t-s) \mathcal{A}_0}\mathscr{P}e^{-s \mathcal{A}}\varphi ds
	\end{align}
	for any $t>0$ and $\varphi\in L^2(\Om)$.\\
	Step 2: We will prove that the operator $C$ satisfies \eqref{Admi} with respect to $\mathcal{A}$. In fact, the operator $C:D(\mathcal{A}_0^\al)\to L^2(\Om)$ is uniformly bounded for any $\al>\frac{1}{4}$, so that $C\mathcal{A}_0^{-\al}\in \mathcal{L}(L^2(\Om))$ and $\|C\mathcal{A}_0^{-\al}\|\le \eta$ for a constant $\eta>0$.  Let us now choose $\al\in (\frac{1}{4},\frac{1}{2})$. We have
	\begin{align}\label{E2}
		\|Ce^{-t\mathcal{A}_0}\|\le \eta \frac{M}{t^{\al}},\quad (t>0).
	\end{align}
	On the other hand, by using \eqref{hadd} and the fact that then $\frac{1}{2}<1-(\beta-\alpha)<1$, we obtain
	\begin{align}\label{E3}
		\left\|C\int^t_0 e^{-(t-s) \mathcal{A}_0}\mathscr{P}e^{-s \mathcal{A}}\varphi ds\right\|& \le \eta \kappa_\beta \int^t_0 \frac{1}{(t-s)^{1-(\beta-\alpha)}} \|\mathscr{M}e^{-s\mathcal{A}}\varphi\|ds\cr & \le \eta \kappa_\beta \|\mathscr{M}\| e^{|\om|t} t^{\beta-\al} \|\varphi\|
	\end{align}
	for any $\varphi\in D(\mathcal{A}),$ and $\om>\om_0(-\mathcal{A}),$ the type of the semigroup generated by $-\mathcal{A}$. Now the fact that $C$ satisfies \eqref{Admi} for $\mathcal{A}$ follows by combining \eqref{E1}, \eqref{E2} and \eqref{E3}. Finally, we can use Theorem \ref{main-1} to conclude.
	
\end{proof}

\bibliography{bibliography}



\end{document}